\newtheorem{theorem}{Theorem}[section]
\newtheorem{proposition}[theorem]{Proposition}
\theoremstyle{definition}
\newenvironment{keywords}{{\bf Key words: }}{}
\newcommand{\abs}[1]{\left\vert#1\right\vert}
\newcommand{\norm}[1]{\left\Vert#1\right\Vert}
\newcommand{\T}{\mathcal {T}}
\newcommand{\R}{\mathbb{R}}
\newcommand{\Pp}{\mathbb{P}}
\newcommand{\E}{\mathbb{E}}
\newcommand{\expectation}[3][0]{%
   \ifcase#1
   \mathbb{E}[ #2 \mid #3 ]
      \or \mathbb{E} \bigl[ #2 \bigm\vert #3 \bigr]
      \or \mathbb{E} \Bigl[ #2 \Bigm\vert #3 \Bigr]
      \or \mathbb{E} \biggl[ #2 \biggm\vert #3 \biggr]
      \or \mathbb{E} \Biggl[ #2 \Biggm\vert #3 \Biggr]
   \else
   \mathbb{E} \left[ #2  \;\middle\vert\; #3 \right)]
   \fi}
\newcommand{\udots}{\mathinner{\mskip1mu\raise1pt\vbox{\kern7pt\hbox{.}}
\mskip2mu\raise4pt\hbox{.}\mskip2mu\raise7pt\hbox{.}\mskip1mu}}
\title{ {\bf A probabilistic representation for heat flow of harmonic map on manifolds with time-dependent Riemannian metric} }
\author{
{\bf    Xin Chen$^{a)}$, Wenjie Ye$^{a)}$}
\thanks{E-mail address: chenxin217@sjtu.edu.cn (X. Chen), yewenjie@sjtu.edu.cn(W.J. Ye)}
\\
\footnotesize {$^{a)}$ School of Mathematical Sciences, Shanghai Jiaotong University, Shanghai 200240, China}
}
\begin{document}

\maketitle
\numberwithin{equation}{section}
\begin{abstract}
    In this paper we will give a probabilistic representation for the heat flow of harmonic map with time-dependent Riemannian metric
   via a forward-backward stochastic differential equation on Riemannian manifolds. Moreover, we can provide
   an alternative stochastic method for the proof of existence of a unique
   local solution for heat flow of harmonic map with time-dependent Riemannian metric.
\end{abstract}

\begin{keywords}
    heat flow of harmonic map with time-dependent Riemannian metric, \\
    forward-backward stochastic differential equation on manifolds, probabilistic method
    \end{keywords}\\

\section{Introduction}

 Eells and Sampson firstly studied the harmonic map between two different Riemannian manifolds
in \cite{ES}. There are various deep applications of harmonic map in the area of differential geometry and topology, we refer
readers to the monograph \cite{LW} for more details. In \cite{ES}, Eells and Sampson also introduced the heat flow of harmonic
map, which could be viewed as a parabolic version of harmonic map and has been applied to prove the
existence of harmonic map for some special Riemannian manifold. Moreover, different properties of such heat flow of harmonic map
have been systematically investigated by \cite{CS,S1,S2}, see also \cite{LW} and references therein.

 In \cite{Ke1}, Kendall proved that the harmonic map from (a Riemannian manifold) $M$ to
 (a Riemannian manifold) $N$ was characterized by a map from $M$ to $N$ which mapped an $M$-valued Brownian motion
 to an $N$-valued martingale associated with the Levi-Civita connection on $N$. Then it is natural to use stochastic methods to study
 the subjects on heat flow of harmonic map. Thalmaier \cite{T} proved several properties concerning about the singularity for heat flow
of harmonic map by using some ideas of backward stochastic differential equation on a manifold. Guo, Philipowski and Thalmaier
\cite{GPT} characterized the differential for heat flow of harmonic map with time-dependent Riemannian metric
through martingale arguments, based on which several
gradient estimates were obtained by stochastic methods.

In this paper, we will give a probabilistic representation for heat flow of harmonic map with time-dependent Riemannian metric
(not the representation for its differential as in \cite{GPT}). Moreover, based on this we will provide a
proof of the existence of a unique local solution for  heat flow of harmonic map with time-dependent Riemannian metric. See Theorem
\ref{t2-1} below for more details. We also refer readers to \cite{LY,Mu} and references therein for more background knowledge and
applications about the heat flow of harmonic map with time-dependent Riemannian metric in the area of geometric analysis.

In fact, as explained in Theorem \ref{t2-1}, we will apply a forward-backward stochastic differential equation
(which will be written as FBSDE from now for simplicity)
to give a representation for the heat flow of harmonic map with time-dependent Riemannian metric, where the forward equation
is a stochastic differential equation (which will be written as SDE from now) associated with $g_t$-Brownian motion
on based Riemannian  manifold $M$, and the backward equation is an $N$-valued backward stochastic differential equation
(which will be written as BSDE from now) introduced
in Chen and Ye \cite{chen_StudyBackwardStochastic_2020} (for target manifold $N$), whose solution is not necessarily restricted in
only one local coordinate.

The linear version of BSDE was firstly introduced by Bismut\cite{bismut_LinearQuadraticOptimal_1976}. Pardoux and Peng \cite{pardoux_AdaptedSolutionBackward_1990} made a break through to prove
the existence of a unique solution for general (non-linear) BSDE under uniformly Lipschitz continuity conditions on the generators.
Moreover, Peng \cite{peng_ProbabilisticInterpretationSystems_1991} and Pardoux and Peng\cite{pardoux_BackwardStochasticDifferential_1992}
established an equivalence between FBSDE and quasi-linear parabolic partial differential systems.  We also refer readers to
Ma, Protter and Yong\cite{MPY},  Wu and Yu\cite{WY} (and references therein) for more details on the subjects about
the connection between FBSDE and PDE.

We also give some remarks on our results.
\begin{itemize}
\item [(1)] As explained in \cite{chen_StudyBackwardStochastic_2020}, the backward equation in \eqref{Gamma-bsde}
could be viewed a BSDE with quadratic growth generators in ambient Euclidean space $\R^{L_2}$ for target manifold
$N$. By our knowledge, among all the known results on the existence of a unique solution for FBSDE with quadratic growth
generators, such as Kupper, Luo and Tangpi\cite{kupper_MultidimensionalMarkovianFBSDEs_2019}, the uniform boundedness of Malliavin derivatives
for the terminal value is required, which implies that the forward equation is a SDE with additive noise. But in our setting, the forward
equation in \eqref{Gamma-bsde} is usually a SDE with multiplicative noise, hence all the known results could not be applied directly.
In this paper
we will develop a new iteration method to solve this kind of FBSDE. Although \eqref{Gamma-bsde} is a decoupled FBSDE
(i.e. the forward SDE does not depend on $Y$ or $Z$ in the backward equation),
we believe that our method could also be applied to solve a
coupled FBSDE.
Compared with \eqref{Gamma-bsde}, some extra non-trivial calculations for the coupled terms are required
for a coupled FBSDE. Due to the limitation for the length of this paper,
we will address such problem in a separate paper, which will extend the results of  Kupper, Luo and Tangpi\cite{kupper_MultidimensionalMarkovianFBSDEs_2019}
to the case with multiplicative noise.

\item [(2)]  By our knowledge, all existing results on the connection between FBSDE and partial differential system, such as
\cite{peng_ProbabilisticInterpretationSystems_1991,pardoux_BackwardStochasticDifferential_1992,MPY,WY},
are only for the equations in Euclidean space.
Different from the results mentioned above, in this paper we will give an expression for the solution of a PDE in a Riemannian manifold
by a FBSDE taking values in Riemannian manifolds (see e.g. \eqref{Gamma-bsde} below).
This gives us another possible
way to study more different non-linear PDEs in a Riemannian manifold.

\item [(3)] In \cite{F,RWZZ}, the stochastic heat flow of harmonic map has been investigated, which was applied to
study the diffusive motion of loops on a Riemannian manifold. Our formulation in this paper will provide
a natural way to introduce the backward stochastic heat flow of harmonic map, which is still unknown by our knowledge.
In fact, using the procedures in \cite{MYZ}, if we choose a non-Markovian terminal value in \eqref{Gamma-bsde} (i.e.
replacing the non-random $h:M\to N$ by an $N$-valued $\mathscr{F}_T^t$ measurable random map $\hat h:M\times \Omega \to N$, where
$\mathscr{F}_T^t:=\sigma\{W_s-W_T;0\le t\le s\le T\}$), then the solution of \eqref{Gamma-bsde} is related
to an $N$-valued backward stochastic partial differential equation. We believe that such equation could be viewed
as a backward stochastic heat flow of harmonic map and plan to study it in the future.
\end{itemize}

The rest of this paper is organized as follows. In Section \ref{section-2} we will present a brief
introduction on some preliminary knowledge and main result of this paper. In Section \ref{section-3} the proof
for main theorem of this paper will be given. In Section \ref{section-4} we will prove Proposition \ref{p4-1}.

\section{Preliminaries and Main Results}\label{section-2}

  Let $T>0$ be a fixed constant. Suppose $M$ is an $m$-dimensional compact manifold endowed with a time-dependent Riemannian metric
$\{g_t\}_{t\in [0,T]}$, i.e. $(M,g_t)$ is a compact Riemannian manifold for every fixed $t\in [0,T]$.
We also assume that
\begin{equation}\label{e2-0}
\mathcal{R}:=\sup_{x\in M, t\in [0,T]}
\left|\frac{\partial g_t(x)}{\partial t}+{\rm Ric}_{g_t}(x)\right|_{\left(T_xM \times T_x M, g_t\times g_t\right)}<\infty,
\end{equation}
where ${\rm Ric}_{g_t}$ denotes the Ricci curvature tensor on $M$ associated with $g_t$, $\left|\cdot\right|_{\left(T_xM \times T_x M, g_t\times g_t\right)}$
is the norm on $T_x M \times T_x M$ induced by product Riemannian metric $g_t\times g_t$. Note that here we also make the
assumption that the constant $\mathcal{R}$ in \eqref{e2-0} is independent of $T$.
Suppose $(N,\mathbf h)$ is an $n$-dimensional compact Riemannian manifold (endowed with a Riemannian metric $\mathbf h$ on $N$).
{\em Denote the covariant derivative on $M$ with respect to Levi-Civita connection associated with $g_t$ by $\nabla^{g_t}$ and
denote the covariant derivative on
$N$ with respect to Levi-Civita connection associated with $\mathbf h$ by $\nabla^{\mathbf h}$ respectively.}
According to Nash embedding theorem, given a $t\in [0,T]$, there exist smooth isometric embedding $\Phi_t:(M,g_t)\to \R^{L_1}$ and
$\Psi: (N,\mathbf h)\to \R^{L_2}$. {\em Without cause of confusion, we will
use $\bar \nabla$ to represent the (covariant) derivative on both $\R^{L_1}$ and $\R^{L_2}$.}
For any $k_1,k_2\in \mathbb{N}$, let
\begin{equation*}
\begin{split}
C^{k_1,k_2}_b([0,T]\times \R^{L_1};\R^{L_2})
:=\Big\{& u:[0,T]\times \R^{L_1}\to \R^{L_2};
\sum_{i=1}^{k_1}\sup_{t\in [0,T];x\in \R^{L_1}}\left|\frac{\partial^i u(t,x)}{\partial t^i}\right|<\infty,\\
&\sum_{i=1}^{k_2}\sup_{t\in [0,T];x\in \R^{L_1}}\left|\bar \nabla^{i} u(t,x)\right|<\infty\Big\},
\end{split}
\end{equation*}
\begin{equation*}
\begin{split}
C_b^{k_1,k_2}([0,T]\times M;N):=\Big\{& u:[0,T]\times M\to N\subset \R^{L_2};
\sum_{i=1}^{k_1}\sup_{t\in [0,T];x\in M}\left|\frac{\partial^i u(t,x)}{\partial t^i}\right|<\infty,\\
&\sum_{i=1}^{k_2}\sup_{t\in [0,T];x\in M}\left|\nabla^{i,g_t}u(t,x)\right|<\infty\Big\},
\end{split}
\end{equation*}
where $\bar \nabla^i$ and $\nabla^{i,g_t}$ denote the $i$-th covariant derivative with respect to
$\bar \nabla$ and $\nabla^{g_t}$ respectively.

Let $\{e_i\}_{i=1}^{L_1}$ be the standard orthonormal basis of $\R^{L_1}$, we define
$A_i(t,x):=\nabla^{g_t}\left\langle \Phi_t(x),e_i\right\rangle_{\R^{L_1}}$, $x\in M$, $1\le i \le L_1$, where
$\langle\cdot, \cdot \rangle_{\R^{L_1}}$ denotes the standard Euclidean inner product in $\R^{L_1}$. It is easy to see
that $A_i(t,x)\in T_x M$ for every $x\in M$ and $A_i(t,\cdot)$ is a smooth vector fields on $M$.
Since $\Phi_t:M\to \R^{L_1}$ is an isometric embedding, we know that $A_i(t,x)=\left\langle \nabla^{g_t}\Phi_t(x),e_i\right\rangle_{\R^{L_1}}$
$=\Pi_{M}^t(x) e_i$ for every $x\in M$ and $t\in [0,T]$, where $\Pi_M^t(x):\R^{L_1}\to T_x M$ represents the orthogonal projection from
$\R^{L_1}$ to $T_x M$ with respect to $g_t$. Given a $t\in [0,T]$ and an $x\in M$, set $A(t,x):\R^{L_1}\to T_x M$ by
$A(t,x)\xi:=\sum_{i=1}^{L_1}\xi_i A_i(t,x)$, therefore we have $A(t,x)=\Pi_M^t(x)$.

For arbitrarily given smooth vector fields $X,Y$ on $N$, let $\bar X$, $\bar Y$ be the (smooth) extension of $X$, $Y$
on $\R^{L_2}$ (which satisfies that $\bar X(p)=X(p)$, $\bar Y(p)=Y(p)$ for any $p\in N$), then we have
$\nabla_X Y(p)=\Pi_N(p)\left(\bar \nabla_{\bar X}\bar Y(p)\right)$, where
$\Pi_N(p):\R^{L_2}\to T_p N$ is the orthogonal projection from
$\R^{L_2}$ to $T_p N$ with respect to $\mathbf h$.
And we define the second fundamental form $\Gamma(p):T_pN \times T_p N \to T_p^{\bot} N$ at $p\in N$ as follows
\begin{equation}\label{e2-2}
\begin{split}
\Gamma(p)(u,v):&=\bar \nabla_{\bar X}\bar Y(p)-\nabla_X Y(p)\\
&=\bar \nabla_{\bar X}\bar Y(p)-\Pi_N(p)\left(\bar \nabla_{\bar X}\bar Y(p)\right), \ \ \forall\ 
u,v\in T_p N,
\end{split}
\end{equation}
where $X$, $Y$ are arbitrarily given smooth vector fields on $N$ such that $X(p)=u$, $Y(p)=v$,
$\bar X$, $\bar Y$ are any smooth
extension
of $X$ and $Y$ to $\R^{L_2}$,
and $T_p^{\bot} N$ denotes the orthogonal complement of $T_p N$ in $\R^{L_2}$.
The value of
$\Gamma(p)(u,v)$ is independent of the choice of  $X$, $Y$, $\bar X$, $\bar Y$.

Let $\operatorname{dist}_N(p)$ be the distance between
$p\in \R^{L_2}$
and $N$ such that
\[
\operatorname{dist}_N(p):=\inf\{\abs{p-q};q\in N\subset \mathbb{R}^{L_2}\},\ \ \forall\ p\in \R^{L_2}
\]
where $\abs{p-q}$ denotes the Euclidean distance between $p$ and $q$ in $\mathbb{R}^{L_2}$. For any $r>0$, we set $B(N,r):=\{ p\in \mathbb{R}^{L_2};\operatorname{dist}_N(p)<r \}$. Note that $N$ could be viewed as a compact sub-manifold of $\mathbb{R}^{L_2}$, and
there exists a $\delta_N> 0$ such that the nearest projection map $P_N:B(N,3\delta_N)\rightarrow N$ 
and the square of distance function $\operatorname{dist}_N^2: B(N,3\delta_N)\rightarrow \R_+$ are smooth.
Here for every $p\in B(N,3\delta_N)$
,
$P_N(p)=q$ so that $q\in N$ is the unique element in $N$ satisfying $\abs{p-q} = \operatorname{dist}_N(p)$.
We also have the following expression for
the second fundamental form $\Gamma(p)$,
\begin{equation*}
\Gamma(p)(u,u)=\sum^{L_2}_{i,j=1}\frac{\partial^2 P_N}{\partial p_i\partial p_j}(p)u_iu_j,\ p\in N, u=(u_1,\cdots,u_{L_2})\in T_pN\subset \R^{L_2}.
\end{equation*}
Then we can define a smooth extension of $\Gamma$ to $\bar \Gamma:\R^{L_2} \to L(\R^{L_2}\times \R^{L_2}
;\R^{L_2})$ as follows, where
$L(\R^{L_2}\times \R^{L_2};\R^{L_2})$ denotes the collection of all linear maps from $\R^{L_2}\times \R^{L_2}$ to $\R^{L_2}$.
\begin{equation}\label{e2-3}
    \bar \Gamma(p)(u,u):=\begin{cases}
        \phi_N(\operatorname{dist}_N(p))\sum^{L_2}_{i,j=1}\frac{\partial^2 P_N}{\partial p_i\partial p_j}
        (P_N(p))u_i u_j,\quad p\in B(N,2\delta_N),\ u=(u_1,\cdots,u_{L_2})\in \R^{L_2}, \\
        0,\quad p\in\R^{L_2}
        /B(N,2\delta_N),\ u=(u_1,\cdots,u_{L_2})\in \R^{L_2}.\\
    \end{cases}
\end{equation}
Here $\phi_N\in C^\infty(\mathbb{R};\mathbb{R})$ is a cut-off function such that
\[
    \phi_N(s)=\begin{cases}
        1, s< \delta_N,\\
        \in (0,1),\ s\in[\delta_N,2\delta_N],\\
        0,\ s>2\delta_N.\\
    \end{cases}
\]
By \eqref{e2-3} it is easy to see that $\bar \Gamma(p)(u,u)=\Gamma(p)(u,u)$ for every $p\in N\subset \R^{L_2}$ and
$u\in T_p N\subset \R^{L_2}$.
We refer readers to \cite[Section III.6]{Cha}, \cite[Chapetr 6]{D} or \cite[Section 1.3]{LW} for detailed introduction
 concerning about various properties for sub-manifold of an ambient Euclidean space, see also \cite{chen_StudyBackwardStochastic_2020}
 for the construction of $\bar \Gamma$ as an extension of $\Gamma$.

Through this paper, we will fix a complete probability space $(\Omega,\mathscr F,\mathbb P)$ and a standard
$L_1$-dimensional  Brownian motion $\{W_s=(W_s^1,W_s^2,\cdots,W_s^{L_1})\}_{s\in [0,T]}$ on $(\Omega,\mathscr F,\mathbb P)$.
Now we consider the SDE on $M$ as follows,
\begin{equation}\label{e2-1}
\begin{cases}
& dX_s^{t,x}=\sum_{i=1}^{L_1}A_i(s,X_s^{t,x})\circ dW_s^i,\ \
0\le t\le s\le T,\\
& X_t^{t,x}=x,\ \ \ x\in M,
\end{cases}
\end{equation}
where $\circ dW_s^i$ denotes the Stratonovich integral with respect to $W_s^i$.

As explained in \cite[Exanple 1]{EL1} (see more details in \cite[Chapter 1]{EL}), it holds that
\begin{equation}\label{e2-1a}
\sum_{i=1}^{L_1}A_i(t,\cdot)A_i(t,\cdot)f=\Delta_{g_t}f,\ \ \forall\ f\in C^\infty(M),
\end{equation}
where $\Delta_{g_t}$ is the Laplace-Beltrami operator
associated with $g_t$. Hence the infinitesimal generator for time-inhomogeneous Markovian process
$\{X_s^{t,x}\}_{s\in [t,T],x\in M}$ is $\Delta_{g_s}$, which implies immediately that the distribution of
$X_{\cdot}^{t,x}$ is the same as that of $M$-valued Brownian motion associated with time-changing metric
$\{g_s;s\in [t,T]\}$ introduced by \cite{ACT,C}.

Moreover, by the same way of \eqref{e2-3}, we could extend $A_i(s,\cdot)$ above to a (smooth) vector fields
$\bar A_i(s,\cdot):\R^{L_1}\to \R^{L_1}$ on $\R^{L_1}$. With the extended vector fields $\bar A_i(s,\cdot)$, we consider
the following SDE on $\R^{L_1}$,
\begin{equation}\label{e2-4}
\begin{cases}
& d\bar X_s^{t,x}=\sum_{i=1}^{L_1}\bar A_i(s,\bar X_s^{t,x})\circ dW_s^i,
\ \ 0\le t\le s\le T,\\
& \bar X_t^{t,x}=x,\ \ \ x\in \R^{L_1}.
\end{cases}
\end{equation}
Although the coefficients $\bar A_i$, $1\le i \le L_1$ for \eqref{e2-4} (and the coefficients $A_i$, $1\le i \le L_1$ for \eqref{e2-1})
are time-dependent, repeating the proof of \cite[Proposition 1.2.8]{hsu2002StochasticAnalysisManifoldsa}
(note that we can still apply the inequality (1.2.4) in the proof of \cite[Proposition 1.2.8]{hsu2002StochasticAnalysisManifoldsa} for time-dependent vector fields) we can obtain that
$\bar X_s^{t,x}=X_s^{t,x}$ for every $x\in M\subset \R^{L_1}$ and $0\le t\le s \le T$, where
$X_s^{t,x}$ is the solution of $M$-valued SDE \eqref{e2-1}.

We write the components of $\bar A_i(t,x)$ as $\bar A_i(t,x)=\left(\bar A_{i1}(t,x),\cdots,\bar A_{iL_1}(t,x)\right)$ and
for every $u\in C_b^{1,\infty}([0,T]\times \R^{L_1};\R^{L_2})$,
$x\in \R^{L_1},\ 1\le i \le L_1$,
we set
\begin{equation}\label{t2-1-2a}
\begin{split}
\left(\bar A(t,x)\bar \nabla u(t,x)\right)_i:&=\sum_{j=1}^{L_1}\bar A_{ij}(t,x)\frac{\partial u(t,x)}{\partial x_j}
=\langle \bar A_i, \bar \nabla u(t,x)\rangle_{\R^{L_1}}\in \R^{L_2},
\\
\bar A(t,x)\bar \nabla u(t,x):&=\left(\left(\bar A(t,x)\bar \nabla u(t,x)\right)_1,\cdots, \left(\bar A(t,x)\bar \nabla u(t,x)\right)_{L_1}
\right)\\
&=\sum_{i=1}^{L_1}e_i \otimes\langle \bar A_i(t,x), \bar \nabla u(t,x)\rangle_{\R^{L_1}}\in
\R^{L_1L_2}
\end{split}
\end{equation}
(recall that $\bar \nabla$ denotes the gradient operator
in $\R^{L_1}$). Moreover, note that $A_i(t,x)=\Pi_M^t(x)e_i$ and
$\bar A_i(t,\cdot):\R^{L_1}\to \R^{L_1}$ is an extension of $A_i(t,\cdot)$ as explained before, from \eqref{t2-1-2a} we have
\begin{equation}\label{t2-1-3a}
\begin{split}
\bar A(t,x)\bar \nabla u(t,x)&=
\sum_{i=1}^{L_1}e_i\otimes\left\langle A_i(t,x), \bar \nabla u(t,x)\right\rangle_{\R^{L_1}}
=\sum_{i=1}^{L_1}e_i\otimes\left\langle \Pi_M^t(x)e_i, \bar \nabla u(t,x)\right\rangle_{\R^{L_1}}\\
&=\sum_{i=1}^{L_1}e_i\otimes\left\langle e_i, \Pi_M^t(x)\left(\bar \nabla u(t,x)\right)\right\rangle_{\R^{L_1}}
=\sum_{i=1}^{L_1}e_i\otimes\left\langle e_i, \nabla^{g_t}u(t,x)\right\rangle_{\R^{L_1}}\\
&=\nabla^{g_t}u(t,x),\ \forall\ x\in M \subset \R^{L_1},
\end{split}
\end{equation}
where in the fourth equality above we have used the fact that
$\Pi_M^t(x)\left(\bar \nabla u(t,x)\right)=\nabla^{g_t}u(t,x)$ for every $x\in M$.
Furthermore, from now on we use the notation as follows
\begin{equation*}
\begin{split}
&\quad \quad\bar \Gamma(p)\left(\bar A(t,x) \bar \nabla u(t,x), \bar A(t,x) \bar \nabla u(t,x)\right)\\
&:=\sum_{i=1}^{L_1}\bar \Gamma(p)\left(\left(\bar A(t,x) \bar \nabla u(t,x)\right)_i,
\left(\bar A(t,x) \bar \nabla u(t,x)\right)_i\right)\\
&=\bar \Gamma(p)\Big({\rm tr}\left(\bar A(t,x) \bar \nabla u(t,x)\otimes \bar A(t,x) \bar \nabla u(t,x)\right)\Big),
\ \ x\in \R^{L_1},\ p\in \R^{L_2}.
\end{split}
\end{equation*}
Here ${\rm tr}\left(\bar A(t,x) \bar \nabla u(t,x)\otimes \bar A(t,x) \bar \nabla u(t,x)\right)$ denotes the trace with respect to
components in $\R^{L_1}\times \R^{L_1}$
(endowed with standard Euclidean metric) for
tensor product $\bar A(t,x) \bar \nabla u(t,x)\otimes \bar A(t,x) \bar \nabla u(t,x)\in
\R^{L_1L_2}\times \R^{L_1L_2}$.
For any $f\in C^{2}(M;N)$, by \cite[Section 1.2 and Section 1.3]{LW} we have
\begin{equation}\label{e2-5}
\begin{split}
&-\Delta_{g_t} f(x)+
\Gamma(f(x))\left(\nabla^{g_t}f(x),
\nabla^{g_t}f(x)\right)
=-\Delta_{g_t,\mathbf{h}} f(x),\ \forall\ x\in M,
\end{split}
\end{equation}
where $\Delta_{g_t} f(x)$ means the Laplace-Beltrami operator $\Delta_{g_t}$ acts on $\R^{L_2}$-valued function
$f\in C^{2}(M;\R^{L_2})$ (since
$N \subset \R^{L_2}$), $\Delta_{g_t,\mathbf{h}}f:={\rm tr}_{g_t}\left(\nabla^{\mathbf h} df\right)$ denotes the tension field of the $C^{2}$ map
$f:M \to N$ with respect to Riemannian metric $g_t$ on $M$ and $\mathbf h$ on $N$.

Then for a fixed smooth map $h\in C^\infty(M;N)$, we introduce a forward-backward stochastic
different equation on
$M \times \R^{L_2}$ (in particular with solution
$\left(X_s^{t,x},Y_s^{t,x}, Z_s^{t,x}\right)\in M\times \R^{L_2}\times \R^{L_1L_2}$
for every
$0\le t\le s\le T$,
$x\in M$),
 \begin{equation}\label{Gamma-bsde}
    \left\{\begin{aligned}
   & dX_s^{t,x}=\sum_{i=1}^{L_1} A_i(s,X_s^{t,x})\circ dW_s^i,\\
    &Y^{t,x}_s=\bar h(X^{t,x}_T) -\sum_{i=1}^{L_1}\int^T_s\frac{1}{2}\bar \Gamma(Y^{t,x}_r)\left(Z^{t,x,i}_r,Z^{t,x,i}_r\right) \,dr
    -\sum_{i=1}^{L_1}\int^T_sZ^{t,x,i}_r\,dW^i_r,\\
    & X_t^{t,x}=x,\ \
    0\le t\le s\le T.
    \end{aligned}\right.
\end{equation}
Here $\bar h:\R^{L_1}\to \R^{L_2}$ is the smooth extension of $h:M\to N$ defined by the same way
as \eqref{e2-3}, $\bar \Gamma(p):\R^{L_2}\times \R^{L_2}\to \R^{L_2}$ is defined by \eqref{e2-3} and
we use the notation $Z_s^{t,x}=\left(Z_s^{t,x,1},\cdots, Z_s^{t,x,L_1}\right)$ with
$Z_s^{t,x,i}\in \R^{L_2}$ for every $1\le i \le L_1$.

For every $q\ge 1$, $0\le T_1<T_2$ and $L,L_1,L_2\in \mathbb{N}_+$, let
\begin{equation*}
       \begin{aligned}
            \mathscr S^q([T_1,T_2];\mathbb R^L):=&\Big\{ Y:[T_1,T_2]\times\Omega\rightarrow\mathbb{R}^L; Y\  \text{is predictable},\mathbb{E}\Big[\sup_{t\in[T_1,T_2]}\abs{Y_t}^q\Big]<\infty,\\
            &t\mapsto Y_t(\omega)\   \text{is continuous on}\  [T_1,T_2] \ \text{for}\  a.s.\  \omega\in \Omega \Big\};
       \end{aligned}
\end{equation*}
\begin{equation*}
       \begin{aligned}
            \mathscr M^q([T_1,T_2];\mathbb R^L)
            :=\Big\{ Z:[T_1,T_2]\times\Omega\rightarrow\mathbb{R}^{L}; Z\  \text{is predictable},\mathbb{E}\Big[\Big(\int^{T_2}_{T_1}\abs{Z_t}^2\,dt\Big)^{q/2}\Big]<\infty   \Big\};
       \end{aligned}
   \end{equation*}
\begin{equation*}
    \begin{aligned}
    \mathscr S^q\oplus\mathscr M^q\left([T_1,T_2];\R^{L_1}\times \R^{L_2}\right)
    :=&
    \Big\{(X,Y,Z); X\in \mathscr S^q([T_1,T_2];\R^{L_1}),\\
    &\quad Y\in \mathscr S^q([T_1,T_2];\R^{L_2}),\
    Z\in \mathscr M^q\left([T_1,T_2];\mathbb R^{L_1L_2}\right)\Big\}.
    \end{aligned}
\end{equation*}

Now we will provide the main theorem of this paper which concerns about a probabilistic representation
of the solution for heat flow of harmonic map with respect to time-dependent Riemannian metric.
\begin{theorem}\label{t2-1}
There exists a $T_0>0$ such that for every $x\in M$, we could find
a unique solution $\left(X_{\cdot}^{t,x},Y_{\cdot}^{t,x},Z_{\cdot}^{t,x}\right)\in
\mathscr S^2\oplus\mathscr M^2\left([t,T_0];\R^{L_1}\times \R^{L_2}\right)$
of \eqref{Gamma-bsde} in time
interval $0\le t \le s \le T_0$ which satisfies that
\begin{equation}\label{t2-1-0a}
|Z_s^{t,x}(\omega)|\le C_1,\ ds\times\Pp-{\rm a.e.}\ (s,\omega)\in [t,T_0]\times \Omega,\ \forall\ x\in M,\ t\in [0,T_0],
\end{equation}
for some $C_1>0$.
At the same time we have  $Y_s^{t,x}\in N$ a.s. for all $0\le t\le s\le T_0$
and $x\in M$.

Moreover, let $v(t,x):=Y_{t}^{t,x}$ for every $t\in [0,T_0]$, $x\in M$, then
$v\in C_b^{1,\infty}([0,T_0]\times M; N)$ (which is non-random)
is the unique classical solution to the heat flow of harmonic map with respect to
time-dependent Riemannian metric as follows
\begin{equation}\label{t2-1-1}
\begin{cases}
& \frac{\partial v(t,x)}{\partial t}=-\frac{1}{2}\Delta_{g_t, \mathbf h}v(t,x), \ t\in [0,T_0],\\
& v(T_0,x)=h(x).
\end{cases}
\end{equation}
\end{theorem}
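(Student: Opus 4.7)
My plan is to build the FBSDE solution by a Picard iteration carefully tailored to the quadratic-growth structure (the key obstacle, since the forward SDE has multiplicative noise and standard quadratic FBSDE results do not apply directly), then read off the PDE \eqref{t2-1-1} from the connection $Y^{t,x}_s = v(s,X^{t,x}_s)$. Throughout, compactness of $M$ and $N$ and the smoothness of $\bar\Gamma$ (which has compact support in the normal variable by \eqref{e2-3}) will be my main quantitative leverage. The forward equation \eqref{e2-1} has a unique strong solution on $[0,T]$ by the remarks following \eqref{e2-4}, so I focus on the backward component.

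First, I fix $T_0 \in (0,T]$ to be chosen later and set up an iteration that freezes the dangerous $Z$-factor. Starting with $(Y^{(0)},Z^{(0)}) = (\bar h(X^{t,x}_{T_0}),0)$, I would solve, for each $n$, the linear BSDE
\begin{equation*}
Y^{(n+1)}_s = \bar h(X^{t,x}_{T_0}) - \sum_i \int_s^{T_0} \tfrac{1}{2}\bar\Gamma\bigl(Y^{(n)}_r\bigr)\bigl(Z^{(n),i}_r,Z^{(n+1),i}_r\bigr)\,dr - \sum_i \int_s^{T_0} Z^{(n+1),i}_r\,dW^i_r,
\end{equation*}
which, under the inductive assumption $\|Z^{(n)}\|_\infty \le C_1$, has a Lipschitz driver in $(Y,Z)$ and therefore admits a unique solution in $\mathscr S^2\oplus\mathscr M^2$ by classical Pardoux--Peng theory. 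The central technical step is then to propagate the \emph{uniform} bound $\|Z^{(n+1)}\|_\infty \le C_1$. For this I would differentiate the iteration in the Malliavin sense, writing $Z^{(n+1)}_s$ as a conditional expectation of $\bar\nabla\bar h(X^{t,x}_{T_0})$ transported by the linearized forward flow plus a remainder controlled by $\int_s^{T_0}|Z^{(n)}|\,|Z^{(n+1)}|\,dr$. The smoothness of $\bar h$ and of the projection $\Pi^s_M$, together with the curvature bound \eqref{e2-0}, give a deterministic estimate of the form $\|Z^{(n+1)}\|_\infty \le \|\bar\nabla\bar h\|_\infty e^{C T_0} + C T_0 \|Z^{(n)}\|_\infty^2$, which a standard bootstrap closes for $T_0$ small enough.

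Once this uniform bound is established the drivers become globally Lipschitz with constants depending only on $C_1$ and $\|\bar\Gamma\|_{C^1}$, and a standard a-priori estimate on the differences $(\delta Y^{(n)},\delta Z^{(n)})$ yields
\begin{equation*}
\mathbb{E}\!\left[\sup_{s\in[t,T_0]}|\delta Y^{(n)}_s|^2 + \int_t^{T_0}|\delta Z^{(n)}_r|^2\,dr\right] \le \kappa(T_0)\,\mathbb{E}\!\left[\sup_s |\delta Y^{(n-1)}_s|^2 + \int_t^{T_0}|\delta Z^{(n-1)}_r|^2\,dr\right],
\end{equation*}
with $\kappa(T_0)\to 0$ as $T_0\to 0$. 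Shrinking $T_0$ a final time makes the iteration a contraction, and the limit is the desired solution of \eqref{Gamma-bsde} satisfying \eqref{t2-1-0a}; uniqueness follows from the same Gronwall estimate. To see $Y^{t,x}_s\in N$ almost surely, I would apply It\^o's formula to $\mathrm{dist}_N^2(Y^{t,x}_s)$ (smooth in $B(N,3\delta_N)$ by the setup preceding \eqref{e2-3}). Because $\bar\Gamma(p)(u,u)\in T_p^\perp N$ for $p\in N$ and is precisely the second fundamental form, the quadratic-variation Hessian cancels against the drift, giving $\mathrm{dist}_N^2(Y^{t,x}_s)\equiv 0$.

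Finally, for the PDE, uniqueness of \eqref{Gamma-bsde} together with the Markov property of $X^{t,x}$ forces $Y^{t,x}_s = v(s,X^{t,x}_s)$ for a deterministic map $v$, and the uniform bounds obtained in the iteration propagate to derivative bounds, yielding $v\in C^{1,\infty}_b([0,T_0]\times M;N)$. Applying It\^o to $v(s,X^{t,x}_s)$ and matching with the BSDE identifies $Z^{t,x,i}_s = \langle \bar A_i,\bar\nabla v\rangle(s,X^{t,x}_s)$ via \eqref{t2-1-3a}, and then matching drift terms and using \eqref{e2-5} with \eqref{e2-1a} converts the $\bar\Gamma$ quadratic term together with $\tfrac{1}{2}\Delta_{g_s}v$ into $\tfrac{1}{2}\Delta_{g_s,\mathbf h}v$, producing \eqref{t2-1-1}. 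Uniqueness of the classical solution of \eqref{t2-1-1} is then a corollary of the uniqueness of the FBSDE, since any classical solution could be plugged into the Feynman--Kac-type representation. The hardest part will clearly be the uniform $L^\infty$-bound on $Z^{(n)}$ in step two, because the quadratic nonlinearity in $Z$ is multiplied by a Malliavin term that itself depends on the iteration.
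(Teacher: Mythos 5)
Your iteration freezes only half of the quadratic driver (replacing $\bar\Gamma(Y)(Z,Z)$ by the bilinear $\bar\Gamma(Y^{(n)})(Z^{(n)},Z^{(n+1)})$, linear in the new unknown), whereas the paper freezes it fully, solving at each step the Lipschitz FBSDE with driver $\tfrac12\bar\Gamma(Y^{t,u,x}_r)\bigl(\bar A\bar\nabla u(r,\bar X_r),\bar A\bar\nabla u(r,\bar X_r)\bigr)$ for a deterministic $u$, and then iterating the map $u\mapsto w:=\T(u)$ in the $C^{0,1}$-norm. That difference is not cosmetic, because it determines where the hard $L^\infty$ estimate on $Z$ lives. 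You propose to obtain the uniform bound $\|Z^{(n+1)}\|_\infty\le C_1$ by Malliavin-differentiating the BSDE and ``transporting $\bar\nabla\bar h(X_{T_0})$ by the linearized forward flow.'' This is precisely the Kupper--Luo--Tangpi strategy that the paper's Remark (1) explains does \emph{not} apply here: with multiplicative noise $\bar A_i(s,X_s)\circ dW^i_s$, the Jacobian $\bar\nabla\bar X^{t,x}_{T_0}$ has finite moments but is not pathwise bounded, so $D_s\bar h(X_{T_0})=\bar\nabla\bar h(X_{T_0})\bar\nabla\bar X^{s,X_s}_{T_0}$ is not an $L^\infty$ random variable, and your estimate $\|Z^{(n+1)}\|_\infty\le\|\bar\nabla\bar h\|_\infty e^{CT_0}+CT_0\|Z^{(n)}\|_\infty^2$ does not follow from the Malliavin representation alone. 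What actually closes the argument in the paper is a deterministic gradient estimate for the time-dependent heat semigroup, \eqref{t2-1-8} from \cite{Cheng}, $\sup_x|\nabla^{g_t}\E[f(X^{t,\cdot}_s)](x)|\le c\,e^{c\mathcal R}(\,(s-t)\wedge1\,)^{-1/2}\|f\|_\infty$, applied after conditioning and invoking the Markov property to reduce everything to bounds on $\nabla^{g_t}w(t,\cdot)$; only the \emph{expectation} of $\bar\nabla\bar X^{t,x}_{T_0}$, estimate \eqref{t2-1-6}, is used, never a pathwise bound. Your sketch lacks this ingredient, and without it the $L^\infty$ bound on $Z^{(n+1)}$ --- which you yourself identify as the hardest step --- is not established. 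You would also need inductive control of $D_\theta Y^{(n)}$ and $D_\theta Z^{(n)}$, which never appear in your induction hypothesis.

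Two smaller points. For $Y^{t,x}_s\in N$, applying It\^o to $\mathrm{dist}^2_N(Y^{t,x}_s)$ alone is not enough: that function is only smooth on the tubular neighbourhood $B(N,3\delta_N)$, so you need the truncated Lyapunov function $G=\chi(\mathrm{dist}^2_N)$ as in Proposition \ref{p4-1}; and the geometric identity does not give an exact cancellation but an inequality $\bar\nabla^2 G(p)(u,u)+\langle\bar\nabla G(p),\bar\Gamma(p)(u,u)\rangle\ge -cG(p)(1+|u|^2)$, so you conclude by Gronwall and you need the already-proved bound $|Z^{t,x}_s|\le C_1$ to absorb the $|u|^2$ factor --- the step is logically downstream of \eqref{t2-1-0a}, not independent of it. Finally, for the PDE, the paper does not simply ``apply It\^o and match drifts'': it first shows $\mathbf u$ is a distributional solution of the quasi-linear system \eqref{t2-1-17} from the iteration, then upgrades to $C^{1,\infty}_b$ via parabolic regularity, and only then uses \eqref{e2-5} (valid because $\mathbf u(t,x)\in N$) to identify the tension field. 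Your route to the PDE is correct in spirit, but the regularity upgrade deserves to be made explicit rather than folded into ``uniform bounds propagate to derivative bounds.''
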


\section{Proof of the Theorem \ref{t2-1}}\label{section-3}
\begin{proof}[Proof of Theorem \ref{t2-1}]
Through the proof, all the constants $c_i$ are chosen to be independent of $T$.

{\bf Step (i)} Given a time horizon $T>0$ and a $u\in  C^{1,\infty}_b\left([0,T]\times \R^{L_1};\R^{L_2}\right)$,
let $\left(\bar X_\cdot^{t,x}, Y_\cdot^{t,u,x}, Z_\cdot^{t,u,x}\right)\in
\mathscr S^2\oplus\mathscr M^2\left([t,T];\R^{L_1}\times \R^{L_2}\right)$ (we omit the parameter
$T$ here for simplicity of notation)
be the unique solution to the following FBSDE,
\begin{equation}\label{t2-1-2}
\begin{cases}
&\bar X_s^{t,x}=x+\sum_{i=1}^{L_1}\int_t^s \bar A_i(r,\bar X_r^{t,x})\circ dW_r^i\\
&\quad\quad=x+\sum_{i=1}^{L_1}\int_t^s\bar A_i(r,\bar X_r^{t,x})dW_r^i+\sum_{i=1}^{L_1}\frac{1}{2}\int_t^s\langle \bar \nabla \bar A_i
(r,\bar X_r^{t,x}),  \bar A_i(r,\bar X_r^{t,x})\rangle_{\R^{L_1}} dr,
\\
    &Y^{t,u,x}_s=\bar h(\bar X^{t,x}_T) -\int^T_s\frac{1}{2}\bar \Gamma(Y^{t,u,x}_r)
    \Big(\bar A(r,\bar X_r^{t,x})
    \bar \nabla u(r,\bar X_r^{t,x}),\bar A(r,\bar X_r^{t,x})\bar \nabla u(r,\bar X_r^{t,x})\Big) \,dr\\
    &\quad\quad\quad\quad-\sum_{i=1}^{L_1}\int^T_sZ^{t,u,x,i}_r\,dW^i_r,\ \ \forall\
    0\le t\le s\le T.
\end{cases}
\end{equation}
Here we use the notation
$$\left\langle \bar \nabla \bar A_i(r,x),  \bar A_i(r,x)\right\rangle_{\R^{L_1}}:=
\left(\sum_{k=1}^{L_1}\frac{\partial \bar A_{i1}(r,x)}{\partial x_k}\bar A_{ik}(r,x),\cdots,\sum_{k=1}^{L_1}\frac{\partial \bar A_{iL_1}(r,x)}{\partial x_k}\bar A_{ik}(r,x)\right)\in \R^{L_1},\ 1\le i \le L_1,$$
and
we write the components of $Z_s^{t,u,x}$ as
$Z_s^{t,u,x}=\left(Z_s^{t,u,x,1},\cdots, Z_s^{t,u,x,L_1}\right)$.

It is easy to verify that  the coefficients of \eqref{t2-1-2} are uniformly Lipschitz continuous, by \cite[Theorem 3.1]{pardoux_AdaptedSolutionBackward_1990}(see also \cite{pardoux_BackwardStochasticDifferential_1992}) we know
that there exists a unique solution $\left(\bar X_\cdot^{t,x}, Y_\cdot^{t,u,x}, Z_\cdot^{t,u,x}\right)\in
\mathscr S^2\oplus\mathscr M^2\left([t,T];\R^{L_1}\times \R^{L_2}\right)$ of \eqref{t2-1-2}.

Then we define
$\T: C^{1,\infty}_b\left([0,T]\times \R^{L_1};\R^{L_2}\right)\to  C^{1,\infty}_b\left([0,T]\times \R^{L_1};\R^{L_2}\right)$
by $\T(u):=w$, where
$w(t,x):=Y_t^{t,u,x}$ with $\left(\bar X_\cdot^{t,x}, Y_\cdot^{t,u,x}, Z_\cdot^{t,u,x}\right)$ being
the unique solution of \eqref{t2-1-2}.
Since the coefficients of \eqref{t2-1-2} have bound derivatives with respect to space variables up to any order, it is standard to verify that
$w\in C^{1,\infty}_b\left([0,T]\times \R^{L_1};\R^{L_2}\right)$ (see e.g.
the arguments of \cite{{pardoux_BackwardStochasticDifferential_1992}} for details),
and according to
\cite[Theorem 3.2]{pardoux_BackwardStochasticDifferential_1992} we know that $w$ is non-random,
\begin{equation}\label{t2-1-4}
Y_s^{t,u,x}=w\left(s,\bar X_s^{t,x}\right),\ Z_s^{t,u,x}=\bar A\left(s,\bar X_s^{t,x}\right)\bar \nabla w\left(s,\bar X_s^{t,x}\right),
\end{equation}
and $w$ solves the following
quasi-linear parabolic system,
\begin{equation}\label{t2-1-3}
\begin{cases}
&\frac{\partial w(t,x)}{\partial t}=-\frac{1}{2}\bar{\mathcal{L}}_t w(t,\cdot)(x)+\frac{1}{2}\bar
\Gamma\left(w(t,x)\right)\left(\bar A(t,x)\bar \nabla u(t,x),
\bar A(t,x)\bar \nabla u(t,x)\right),\\
& w(T,x)=\bar h(x),\ \ \forall\ t\in [0,T],
\end{cases}
\end{equation}
where $\bar {\mathcal{L}}_t f(x):=\sum_{i=1}^{L_1}\bar A_i(t,\cdot)\left(\bar A_i(t,\cdot)f(\cdot)\right)(x)$
for every $f\in C^2(M;\R)$.
It is not difficult to verify that $\bar{\mathcal{L}}_s$ is the infinitesimal generator of
$\{\bar X_s^{t,x}\}_{s\in [t,T]}$.
In particular, we want to stress that although in \cite[Theorem 3.2]{pardoux_BackwardStochasticDifferential_1992},
the coefficients of forward equation for $\bar X_{\cdot}^{t,x}$ are time-independent, following the same arguments
the conclusion still holds for the case where the coefficients are time-dependent, see also \cite{WY}.

{\bf Step (ii)} For any $u\in C^{1,\infty}_b\left([0,T]\times \R^{L_1};\R^{L_2}\right)$, we could also view it as an
element in $C^{1,\infty}_b\left([0,T]\times M;\R^{L_2}\right)$ by restricting its definition in
$[0,T]\times M$. Let
\begin{equation*}
\norm{u}_{C^{0,1}([0,T]\times M)}:=\sup_{t\in [0,T];x\in M}\left|u(t,x)\right|_{\R^{L_2}}+\sup_{t\in [0,T];x\in M}
\left| \nabla^{g_t} u(t,x)\right|_{(T_x M,g_t)\times \R^{L_2}}.
\end{equation*}
Here $\left|\cdot\right|_{(T_x M,
g_t
)\times \R^{L_2}}$ denotes the norm for vectors in $T_x M \times \R^{L_2}$ with
respect to the product metric $g_t\times \bar g$, where $\bar g$ is the standard Euclidean metric on $\R^{L_2}$.
Let $w:=\T(u)$, by definition we have $w(t,x)=Y_t^{t,u,x}$, hence according to
\eqref{t2-1-2} it holds that
\begin{equation}\label{t2-1-5}
\begin{split}
&\quad\quad w(t,x)=\mathbb{E}\left[Y_t^{t,u,x}\right]\\
&=
\E\left[\bar h\left(\bar X_T^{t,x}\right)\right]-
\frac{1}{2}\int_t^T
\E\left[\bar \Gamma\left(Y_r^{t,u,x}\right)\left(\bar A(r,\bar X_r^{t,x})\bar \nabla
u(r,\bar X_r^{t,x}),\bar A(r,\bar X_r^{t,x})\bar \nabla
u(r,\bar X_r^{t,x})\right)\right]dr.
\end{split}
\end{equation}
As explained in Section \ref{section-2}, $\bar X_s^{t,x}=X_s^{t,x}\in M$, a.s. for
every $x\in M$ and $0\le t\le s \le T$, therefore we obtain that for every $x\in M$,
\begin{equation*}
\left|\bar h(\bar X_s^{t,x})\right|\le \sup_{x\in M}|\bar h(x)|=\sup_{x\in M}|h(x)|:=\norm{h}_{C^0(M)},\ a.s.,
\end{equation*}
\begin{equation*}
\begin{split}
\quad \left|\bar A(r,\bar X_r^{t,x})\bar \nabla
u(r,\bar X_r^{t,x})\right|
_{\R^{L_1L_2}}
&=\left|\nabla^{g_r}
u(r,\bar X_r^{t,x})\right|_{(T_{\bar X_r^{t,x}} M,g_r)\times \R^{L_2}}\\
&\le
\sup_{x\in M}\left| \nabla^{g_r} u(r,x)\right|_{(T_x M,g_r)\times \R^{L_2}},\ a.s..
\end{split}
\end{equation*}
Here the first equality is due to \eqref{t2-1-3a}.
So combining this with \eqref{t2-1-5} we obtain
\begin{equation}\label{t2-1-5a}
\begin{split}
\sup_{t\in [0,T];x\in M} |w(t,x)|\le \norm{h}_{C^0(M)}+c_1T\norm{u}_{C^{0,1}([0,T]\times M)}^2,
\end{split}
\end{equation}
where we also use the property that $\sup_{p\in \R^{L_2}}|\bar \Gamma(p)|
_{L(\R^{L_2}\times \R^{L_2};\R^{L_2})}<\infty$.

Since $\bar A_i(t,\cdot)\in C_b^\infty(\R^{L_1};\R^{L_1})$, it is well known that (see e.g. \cite{Li}) we can find
a $\Pp$-null set $\Lambda$ such that $\bar X_s^{t,\cdot}(\omega)\in C_b^1(\R^{L_1};\R^{L_1})$ for every $\omega \notin \Lambda$ and
$0\le t\le s \le T$. Moreover, let $\bar \nabla \bar X_s^{t,x}$ denote the gradient of $\bar X_s^{t,x}$ with respect to
 space variable $x\in \R^{L_1}$, then it satisfies the following linear SDE
\begin{equation*}
\begin{split}
\bar \nabla  \bar X_s^{t,x}=\mathbf{I}+\sum_{i=1}^{L_1}\int_t^s \bar \nabla \bar A_i\left(r,\bar X_r^{t,x}\right)\bar
\nabla \bar X_r^{t,x} \circ dW_r^i,\ \forall\ 0\le t \le s\le T,\ x\in \R^{L_1},
\end{split}
\end{equation*}
where $\mathbf{I}$ is the identity map from $\R^{L_1}$ to $\R^{L_1}$. Therefore based on this (linear) expression  and applying
standard arguments (see  e.g. \cite{pardoux_BackwardStochasticDifferential_1992} for details) it is not difficult to verify that
\begin{equation}\label{t2-1-6}
\sup_{0\le t\le s\le T;x\in \R^{L_1}}
\E\left[\left|\bar \nabla \bar X_s^{t,x}\right|\right]\le c_2e^{c_3T}
\end{equation}
for some positive constants $c_2,c_3$.

Meanwhile according to \eqref{e2-3} we can define the extension $\bar h:\R^{L_1}\to \R^{L_2}$ by
\begin{equation*}
\bar h(x)=\phi_M^0\left({\rm dist}^0_M(x)\right)h\left(P_M^0(x)\right),\ \forall\ x\in \R^{L_1},
\end{equation*}
where ${\rm dist}^0_M(x)$ denotes the distance between $x$ and $\Phi^0(M)$ in $\R^{L_1}$, $P_M^0(x)$ is the nearest
projection map from $B\left(\Phi^0(M),3\delta_{(M,g_0)}\right)$ (here we use the isometric embedding $\Phi^0:M \to \R^{L_1}$
associated with $g_0$) to $\R^{L_1}$,
$\phi_M^0$ is a cut-off function on $B\left(\Phi^0(M),3\delta_{(M,g_0)}\right)$ defined by the same way of \eqref{e2-3}. Based on
the expression above it is easy to see that
\begin{align*}
\sup_{x\in \R^{L_1}}\left|\bar \nabla \bar h(x)\right|\le c_4\left(\sup_{x\in M}|h(x)|+
\sup_{x\in M}|\nabla^{g_0}h(x)|\right):=c_4\norm{h}_{C^1(M;g_0)},
\end{align*}
combining which with \eqref{t2-1-6} yields immediately that
for every $t\in [0,T]$ and $x\in M\subset \R^{L_1}$,
\begin{equation}\label{t2-1-7}
\begin{split}
\left|\nabla^{g_t}\E\left[\bar h\left(\bar X_T^{t,\cdot}\right)\right](x)\right|&
\le c_5\left|\bar \nabla \E\left[\bar h\left(\bar X_T^{t,\cdot}\right)\right](x)\right|\\
&\le c_5\sup_{x\in \R^{L_1}}\left|\bar \nabla \bar h(x)\right|\cdot
\sup_{x\in \R^{L_1}}\E\left[\left|\bar \nabla \bar X_T^{t,x}\right|\right]\\
&\le c_6e^{c_7 T}\norm{h}_{C^1(M;g_0)}.
\end{split}
\end{equation}

 According to \eqref{e2-1a}, $\{X_{s}^{t,x}; 0\le t\le s \le T,x\in M\}$ is the solution to
SDE \eqref{e2-1}, which is an $M$-valued Markovian process with time-dependent infinitesimal generator
$\mathcal {L}_s=\Delta_{g_s}$, $s\in [t,T]$. So the distribution of $\{X_{s}^{t,x}; 0\le t\le s \le T,x\in M\}$ is
the same as that of $g_t$-Brownian motion defined by \cite{ACT,C}, hence due to \cite[Thereom 3.4]{Cheng}
there exist positive constants $c_8, c_9$ such that for all $f\in C_b(M)$ and $0\le t \le s \le T$,
\begin{equation}\label{t2-1-8}
\begin{split}
\sup_{x\in M}\left|\nabla^{g_t}\E\left[f(X_s^{t,\cdot})\right](x)\right|_{(T_x M,g_t)}&
\le \frac{c_8e^{c_9\mathcal{R}}}{\sqrt{(s-t)\wedge 1}}\sup_{x\in M}|f(x)|.
\end{split}
\end{equation}
Here $\mathcal{R}$ is the constant defined by \eqref{e2-0}. Therefore we obtain for every
$0\le t\le s\le T$,
\begin{align*}
&\quad \sup_{x\in M}\Big|\nabla^{g_t}\E\left[\bar \Gamma\left(Y_s^{t,u,\cdot}\right)
\left(\bar A\left(s,\bar X_{s}^{t,\cdot}\right)\bar \nabla u\left(s,\bar X_s^{t,\cdot}\right)
, \bar A\left(s,\bar X_{s}^{t,\cdot}\right)\bar \nabla u\left(s,\bar X_s^{t,\cdot}\right)\right)\right](x)\Big|\\
&=\sup_{x\in M}\Big|\nabla^{g_t}\E\left[\bar \Gamma\left(w\left(s,\bar X_s^{t,\cdot}\right)\right)
\left(\bar A\left(s,\bar X_{s}^{t,\cdot}\right)\bar \nabla u\left(s,\bar X_s^{t,\cdot}\right)
, \bar A\left(s,\bar X_{s}^{t,\cdot}\right)\bar \nabla u\left(s,\bar X_s^{t,\cdot}\right)\right)\right](x)\Big|\\
&=\sup_{x\in M}\Big|\nabla^{g_t}\E\left[\bar \Gamma\left(w\left(s, X_s^{t,\cdot}\right)\right)
\left(\nabla^{g_s} u\left(s, X_s^{t,\cdot}\right)
, \nabla^{g_s} u\left(s, X_s^{t,\cdot}\right)\right)\right](x)\Big|\\
&\le \frac{c_{10}e^{c_9\mathcal{R}}}{\sqrt{(s-t)\wedge 1}}\sup_{x\in M}
\Big|\left|\bar \Gamma(w(s,x))\right|
_{L(\R^{L_2}\times \R^{L_2};\R^{L_2})}
\cdot \left|\nabla^{g_s} u\left(s,x\right)\right|_{(T_xM,g_s)\times \R^{L_2}}^2\Big|
\le \frac{c_{11}\norm{u}_{C^{0,1}([0,T]\times M)}^2}{\sqrt{(s-t)\wedge 1}}.
\end{align*}
Here the first step follows from \eqref{t2-1-4}, the second step is due to \eqref{t2-1-3a} and the fact that
$\bar X_{s}^{t,x}=X_s^{t,x}$ a.s. for every $0\le t \le s \le T$ and $x\in M \subset \R^{L_1}$, in the third step we have applied
\eqref{t2-1-8}.  combining this with \eqref{t2-1-5a} and \eqref{t2-1-7} yields that
\begin{equation*}
\begin{split}
\norm{w}_{C^{0,1}([0,T]\times M)}&=\sup_{t\in [0,T];x\in M}|w(t,x)|+
\sup_{t\in [0,T];x\in M}\left|\nabla^{g_t}w(t,x)\right|_{(T_xM,g_t)
\times \R^{L_2}}\\
&\le c_{12}\left(e^{c_{13}T}\norm{h}_{C^1(M;g_0)}+T\norm{u}_{C^{0,1}([0,T]\times M)}^2+\sup_{t\in [0,T]}\int_{t}^T
\frac{\norm{u}_{C^{0,1}([0,T]\times M)}^2}{\sqrt{(s-t)\wedge 1}}ds \right)\\
&\le c_{14}\left(e^{c_{13}T}\norm{h}_{C^1(M;g_0)}+T\norm{u}_{C^{0,1}([0,T]\times M)}^2+
\sqrt{T}\norm{u}_{C^{0,1}([0,T]\times M)}^2\right),
\end{split}
\end{equation*}
which means
\begin{equation}\label{t2-1-9a}
\begin{split}
\norm{\T(u)}_{C^{0,1}([0,T]\times M)}&\le
c_{14}\left(e^{c_{13}T}\norm{h}_{C^1(M;g_0)}+(\sqrt{T}+T)\norm{u}_{C^{0,1}([0,T]\times M)}^2\right)\\
&\le c_{14}\left(e^{c_{13}T}\norm{h}_{C^{0,1}([0,T]\times M)}+(\sqrt{T}+T)\norm{u}_{C^{0,1}([0,T]\times M)}^2\right).
\end{split}
\end{equation}

{\bf Step (iii)} Given $u_1,u_2\in C_b^{1,\infty}([0,T]\times \R^{L_1};\R^{L_2})$,
set $w_1:=\T(u_1)$, $w_2:=\T(u_2)$. By \eqref{t2-1-3a}, \eqref{t2-1-4} and \eqref{t2-1-5} we obtain
that for every $x\in M\subset \R^{L_1}$,
\begin{equation}\label{t2-1-9}
\begin{split}
\quad w_2(t,x)-w_1(t,x)
&=
\frac{1}{2}\int_t^T \E\Big[\bar \Gamma\left(w_1(s,X_s^{t,x})\right)\left(\nabla^{g_s}u_1(s,X_s^{t,x}), \nabla^{g_s}u_1(s,X_s^{t,x})\right)\\
&\quad \quad -\bar \Gamma\left(w_2(s,X_s^{t,x})\right)\left(\nabla^{g_s}u_2(s,X_s^{t,x}), \nabla^{g_s}u_2(s,X_s^{t,x})\right)\Big]ds.
\end{split}
\end{equation}
Based on \eqref{t2-1-9} we conclude that
\begin{equation}\label{t2-1-11}
\begin{split}
& \quad\quad\sup_{x\in M}|w_1(t,x)-w_2(t,x)|\\
&\le \int_t^T \sup_{x\in M}\Big|\bar \Gamma\left(w_1(s,x)\right)\left(\nabla^{g_s}u_1(s,x), \nabla^{g_s}u_1(s,x)\right)-
\bar \Gamma\left(w_2(s,x)\right)\left(\nabla^{g_s}u_2(s,x), \nabla^{g_s}u_2(s,x)\right)\Big|ds\\
&\le \int_t^T\Big(\sup_{x\in M}\Big|\left(\bar \Gamma\left(w_1(s,x)\right)-
\bar \Gamma\left(w_2(s,x)\right)\right)\left(\nabla^{g_s}u_1(s,x), \nabla^{g_s}u_1(s,x)\right)\Big|\\
&+ \sup_{x\in M}\Big|\bar \Gamma\left(w_2(s,x)\right)\left(\nabla^{g_s}u_1(s,x), \nabla^{g_s}u_1(s,x)\right)-
\bar \Gamma\left(w_2(s,x)\right)\left(\nabla^{g_s}u_2(s,x), \nabla^{g_s}u_2(s,x)\right)\Big|\Big)ds\\
&\le c_{15}\Big(\norm{u_1}_{C^{0,1}([0,T]\times M)}^2\cdot\int_t^T \sup_{x\in M}|w_1(s,x)-w_2(s,x)|ds\\
&\quad \quad +T\left(\norm{u_1}_{C^{0,1}([0,T]\times M)}+\norm{u_2}_{C^{0,1}([0,T]\times M)}\right)\cdot \norm{u_1-u_2}_{C^{0,1}([0,T]\times M)}\Big).
\end{split}
\end{equation}
Therefore using Gronwall's lemma we arrive at
\begin{equation}\label{t2-1-10}
\begin{split}
\quad \sup_{t\in [0,T]\times M}|w_1(t,x)-w_2(t,x)|&\le  c_{16}T e^{c_{17}T\norm{u_1}_{C^{0,1}([0,T]\times M)}^2}\\
&\cdot \left(\norm{u_1}_{C^{0,1}([0,T]\times M)}+\norm{u_2}_{C^{0,1}([0,T]\times M)}\right)\norm{u_1-u_2}_{C^{0,1}([0,T]\times M)}.
\end{split}
\end{equation}

Meanwhile we can deduce from \eqref{t2-1-8}, \eqref{t2-1-9} and \eqref{t2-1-10} that
\begin{equation*}
\begin{split}
&\quad \quad\sup_{x\in M}|\nabla^{g_t}w_1(t,x)-\nabla^{g_t}w_2(t,x)|_{(T_x M,g_t)\times \R^{L_2}}\\
&\le \int_t^T \frac{c_{18}}{\sqrt{(s-t)\wedge 1}} \sup_{x\in M}\Big|\bar \Gamma\left(w_1(s,x)\right)\left(\nabla^{g_s}u_1(s,x), \nabla^{g_s}u_1(s,x)\right)\\
&\quad \quad \quad -\bar \Gamma\left(w_2(s,x)\right)\left(\nabla^{g_s}u_2(s,x), \nabla^{g_s}u_2(s,x)\right)\Big|ds\\
&\le c_{19}\Big(\norm{u_1}_{C^{0,1}([0,T]\times M)}^2\sup_{t\in [0,T]\times M}|w_1(t,x)-w_2(t,x)|\\
&\quad +\left(\norm{u_1}_{C^{0,1}([0,T]\times M)}+\norm{u_2}_{C^{0,1}([0,T]\times M)}\right)\cdot \norm{u_1-u_2}_{C^{0,1}([0,T]\times M)}
\Big)\cdot \int_t^T \frac{1}{\sqrt{(s-t)\wedge 1}}ds\\
&\le c_{20}\sqrt{T}\Big(\left(1+T\norm{u_1}_{C^{0,1}([0,T]\times M)}^2 e^{c_{17}T\norm{u_1}_{C^{0,1}([0,T]\times M)}^2}\right)\\
&\quad \quad \quad \quad \quad \cdot\left(\norm{u_1}_{C^{0,1}([0,T]\times M)}+\norm{u_2}_{C^{0,1}([0,T]\times M)}\right)\cdot \norm{u_1-u_2}_{C^{0,1}([0,T]\times M)}\Big).
\end{split}
\end{equation*}
Here in the second inequality above we have applied the same arguments for \eqref{t2-1-11}.  Hence combining all above estimates together yields
that
\begin{equation}\label{t2-1-12}
\begin{split}
&\quad\quad \norm{\T(u_1)-\T(u_2)}_{C^{0,1}([0,T]\times M)}=\norm{w_1-w_2}_{C^{0,1}([0,T]\times M)}\\
&\le c_{21}\left(T^{1/2}+\left(T+T^{3/2}\norm{u_1}_{C^{0,1}([0,T]\times M)}^2\right) e^{c_{17}T\norm{u_1}_{C^{0,1}([0,T]\times M)}^2
}\right)\\
&\quad\quad \cdot\left(\norm{u_1}_{C^{0,1}([0,T]\times M)}+\norm{u_2}_{C^{0,1}([0,T]\times M)}\right)\cdot \norm{u_1-u_2}_{C^{0,1}([0,T]\times M)}.
\end{split}
\end{equation}

{\bf Step (iv)} Fixing a positive constant $K>\max\{4c_{14},2\}\cdot\norm{h}_{C^{0,1}([0,T]\times M)}$, where
$c_{14}$ is the same constant in \eqref{t2-1-9a}. Let
\begin{equation*}
\mathbb{B}(T;K):=\{u\in C_b^{1,\infty}([0,T]\times M;\R^{L_2}); \norm{u}_{C^{0,1}([0,T]\times M)}\le K\}.
\end{equation*}
By \eqref{t2-1-9a} we know immediately that for $T_1:=\min\left\{\frac{\log 2}{c_{13}}, \left(\frac{1}{4c_{14}K}\right)^2, \frac{1}{4c_{14}K}
\right\}$ (here $c_{13}$, $c_{14}$ are the same constants in \eqref{t2-1-9a}), it holds that
for every $0<T\le T_1$,
\begin{equation}\label{t2-1-13}
\T\left(u\right)\in  \mathbb{B}(T;K),\ \ \forall\ u\in \mathbb{B}(T;K).
\end{equation}
Furthermore, set $T_2:=\min\left\{\left(\frac{1}{8c_{21}K}\right)^2,
\left(\frac{1}{16c_{21}e}\right)^{2/3}\frac{1}{K^2}, \frac{1}{c_{17}K^2}, \frac{1}{16c_{21}eK}\right\}$, where
$c_{17}$ and $c_{21}$ are the same constants in \eqref{t2-1-12}. Then we can deduce from \eqref{t2-1-12} that
for every $0<T\le T_2$,
\begin{equation}\label{t2-1-14}
\norm{\T\left(u_1\right)-\T\left(u_2\right)}_{C^{0,1}([0,T]\times M)}\le \frac{1}{2}
\norm{u_1-u_2}_{C^{0,1}([0,T]\times M)},\ \forall\ u_1,u_2\in \mathbb{B}(T;K).
\end{equation}

Let $T_0:=\min\{T_1,T_2\}$. Set $u_0(t,x)=\bar h(x)$ for all $t\in [0,T_0]$ and $x\in M$ and we define
$u_n\in C^{1,\infty}_b\left([0,T_0]\times M;\R^{L_2}\right)$
iteratively by
$u_{n+1}:=\T(u_n)$ for all $n\ge 0$. Note that $\norm{u_0}_{C^{0,1}([0,T_0]\times M)}$ $=\norm{h}_{C^{0,1}([0,T_0]\times M)}$
$\le K$,
so according to \eqref{t2-1-13} and \eqref{t2-1-14} we obtain that for all $n\ge 1$,
\begin{equation*}
\begin{split}
&u_n=\T(u_{n-1})\in \mathbb{B}(T_0;K),\\
&\norm{u_{n+1}-u_n}_{C^{0,1}([0,T_0]\times M)}=\norm{\T(u_n-u_{n-1})}_{C^{0,1}([0,T_0]\times M)}
\le \frac{1}{2}\norm{u_{n}-u_{n-1}}_{C^{0,1}([0,T_0]\times M)}.
\end{split}
\end{equation*}
Hence by Banach fixed point theorem, we could find a $\mathbf{u}\in C_b^{0,1}([0,T_0]\times M;\R^{L_2})$ such that
\begin{equation}\label{t2-1-15}
\lim_{n \to \infty}\norm{u_n-\mathbf{u}}_{C^{0,1}([0,T_0]\times M)}=0.
\end{equation}
For every $n\ge 1$, suppose $(\bar X_s^{t,x}, Y_s^{t,n,x},Z_s^{t,n,x})$ is the solution of \eqref{t2-1-2} with $u=u_{n-1}$,
by \eqref{t2-1-3a}, \eqref{t2-1-4} and definition of $u_n$ we have
\begin{equation}\label{t2-1-16}
Y_s^{t,n,x}=u_{ n}(s,X_s^{t,x}),\ Z_s^{t,n,x}=\nabla^{g_s}u_{n}(s,X_s^{t,x}),\ \ x\in M,\ 0\le t\le s \le T_0,
\end{equation}
where we have also used the fact that $\bar X_s^{t,x}=X_s^{t,x}\in M$ a.s. for all $x\in M\subset \R^{L_1}$.

Let
\begin{equation*}
Y_s^{t,x}:=\mathbf{u}(s,X_s^{t,x}),\ Z_s^{t,x}:=\nabla^{g_s}\mathbf{u}(s,X_s^{t,x}),\ 0\le t\le s\le T_0,\ x\in M.
\end{equation*}
With the expression \eqref{t2-1-16} for $Y_s^{t,n,x}$, $Z_s^{t,n,x}$, taking $n \to \infty$ in \eqref{t2-1-2} and applying
\eqref{t2-1-15} we deduce immediately that $(X_\cdot^{t,x}, Y_\cdot^{t,x},Z_\cdot^{t,x})
\in
\mathscr S^2\oplus\mathscr M^2\left([t,T_0];\R^{L_1}\times \R^{L_2}\right)$ is a solution of \eqref{Gamma-bsde}
which satisfies \eqref{t2-1-0a}. Moreover,
by Proposition \ref{p4-1}
we know that $Y_s^{t,x}\in N$ a.s. for all $0\le t\le s\le T_0$ and $x\in M$.

Suppose $(X_\cdot^{t,x}, \tilde Y_\cdot^{t,x},\tilde Z_\cdot^{t,x})
\in
\mathscr S^2\oplus\mathscr M^2\left([t,T_0];\R^{L_1}\times \R^{L_2}\right)$
(since the forward SDE has a unique solution so we take $X_\cdot^{t,x}$ here) is another solution of \eqref{Gamma-bsde} which satisfies
\eqref{t2-1-0a}. Repeating the arguments for the proof
(of uniqueness of the solution) in \cite[Theorem 3.1]{kupper_MultidimensionalMarkovianFBSDEs_2019}, we can show that
$\tilde Y_s^{t,x}=Y_s^{t,x}$ a.s. for every $0\le t\le s\le T$ and $\tilde Z_s^{t,x}(\omega)=Z_s^{t,x}(\omega)$ for
$ds\times \Pp$ a.s.
$(s,\omega)\in (t,T_0)\times \Omega$, which implies the uniqueness of the solution of \eqref{Gamma-bsde}.

According to \eqref{t2-1-3} and restricting it at $x\in M$ we know that $u_n$ satisfies the following equation
\begin{equation*}
\begin{cases}
&\frac{\partial u_n(t,x)}{\partial t}=-\frac{1}{2}\Delta_{g_t}u_n(t,x)+
\frac{1}{2}\bar \Gamma(u_n(t,x))\left(\nabla^{g_t}u_{n-1}(t,x),
\nabla^{g_t}u_{n-1}(t,x)\right),\\
& u_n(T_0,x)=h(x),\ \ t\in [0,T_0], x\in M.
\end{cases}
\end{equation*}
Here we have used the property \eqref{t2-1-3a} and that
\begin{equation*}
\sum_{i=1}^{L_1}\bar A_i(t,\cdot)\left(\bar A_i(t,\cdot)f(\cdot)\right)(x)=
\sum_{i=1}^{L_1}A_i(t,\cdot)\left(A_i(t,\cdot)f(\cdot)\right)(x)=\Delta_{g_t}f(x),\ \forall\ x\in M, f\in C_b^2(\R^{L_1}),
\end{equation*}
which is due to \eqref{e2-1a} and the fact that $\bar A_i(t,\cdot)$ is an extension of $A_i(t,\cdot)$.

Applying integration by parts formula we obtain that for every
$f\in C_b^{1}(M;\R)$, $0\le t\le T_0$,
\begin{equation*}
\begin{split}
& \quad \quad \int_M h(x)f(x)\,dx^{g_{T_0}}-\int_M u_n(t,x) f(x)\,dx^{g_t}
-\int_t^{T_0}\int_M u_n(s,x) f(x)\frac{\partial}{\partial s}\left(dx^{g_s}\right)\,ds\\
&=
\frac{1}{2}\int_t^{T_0}\int_M \left\langle \nabla^{g_s} u_n(s,x), \nabla^{g_s}f(x)\right\rangle
_{(T_x M,g_s)}
\,dx^{g_s}ds\\
&+\frac{1}{2}\int^{T_0}_t\int_M \bar \Gamma(u_n(s,x))\big(\nabla^{g_s}u_{n-1}(s,x),
\nabla^{g_s}u_{n-1}(s,x)\big) f(x)\,dx^{g_s} ds,
\end{split}
\end{equation*}
where $dx^{g_t}$ denotes the volume measure on $M$ induced by Riemannian metric $g_t$. Therefore letting
$n \to \infty$ in above and applying \eqref{t2-1-15} we know that $\mathbf{u}\in C^{0,1}_b([0,T_0]\times M; \R^{L_2})$ satisfies that
for every $t\in [0,T_0]$ and $f\in C_b^1(M;\R)$,
\begin{equation*}
  \begin{split}
  & \quad \quad \int_M h(x)f(x)\,dx^{g_{T_0}}-\int_M \mathbf{u}(t,x) f(x)\,dx^{g_t}
  -\int_t^{T_0}\int_M \mathbf{u}(s,x) f(x)\frac{\partial}{\partial s}\left(dx^{g_s}\right)\,ds\\
  &=
  \frac{1}{2}\int_t^{T_0}\int_M \left\langle \nabla^{g_s} \mathbf{u}(s,x), \nabla^{g_s}f(x)\right\rangle_{(T_x M,g_s)}\,dx^{g_s}ds\\
  &+\frac{1}{2}\int^{T_0}_t\int_M \bar \Gamma(\mathbf{u}(s,x))\big(\nabla^{g_s}\mathbf{u}(s,x),
  \nabla^{g_s}\mathbf{u}(s,x)\big) f(x)\,dx^{g_s} ds,
  \end{split}
  \end{equation*}
which implies $\mathbf{u}\in C^{0,1}_b([0,T_0]\times M; \R^{L_2})$ is a distributional solution to  the following quasi-linear
parabolic system,
\begin{equation}\label{t2-1-17}
\begin{split}
&\frac{\partial u(t,x)}{\partial t}=-\frac{1}{2}\Delta_{g_t} u(t,x)+
\frac{1}{2}\bar \Gamma(u(t,x))\left(\nabla^{g_t}u(t,x),
\nabla^{g_t}u(t,x)\right),\\
&u(T_0,x)=h(x),\ \ t\in [0,T_0], x\in M.
\end{split}
\end{equation}
Based on prior regularity condition $\sup_{(t,x)\in[0,T_0]\times M}|\nabla^{g_t} \mathbf{u}(t,x)|_{(T_xM;g_t)\times \R^{L_2}}<\infty$
and $h\in C_b^\infty(M;N)$,
according to standard theory of quasi-linear parabolic system (see e.g. \cite[Chapter V and VII]{LSU},
or \cite[Appendix A]{M}) we know that $\mathbf{u}\in C_b^{1,\infty}([0,T_0]\times M;\R^{L_2})$ and $\mathbf{u}(t,x)=Y_t^{t,x}$ is a
classical solution of \eqref{t2-1-17}.

Note that $\mathbf{u}(t,x)=Y_t^{t,x}\in N$, so by \eqref{e2-5} we have
\begin{equation*}
\begin{split}
&\quad-\Delta_{g_t} \mathbf{u}(t,x)+
\bar \Gamma(\mathbf{u}(t,x))\left(\nabla^{g_t}\mathbf{u}(t,x),
\nabla^{g_t}\mathbf{u}(t,x)\right)\\
&=-\Delta_{g_t} \mathbf{u}(t,x)+
\Gamma(\mathbf{u}(t,x))\left(\nabla^{g_t}\mathbf{u}(t,x),
\nabla^{g_t}\mathbf{u}(t,x)\right)=-\Delta_{g_t,\mathbf h}\mathbf{u}(t,x),\ \forall\ x\in M.
\end{split}
\end{equation*}
combining this with \eqref{t2-1-17} we know $\mathbf{u}(t,x)=Y_t^{t,x}$ is the unique solution of \eqref{t2-1-1}
in $C_b^{1,\infty}([0,T_0]\times M;N)$.
\end{proof}

\begin{section}{Appendix}\label{section-4}
The following proposition has been shown in \cite[Theorem 3.2]{chen_StudyBackwardStochastic_2020}. For
convenience of readers, we will also give a proof here.
\begin{proposition}\label{p4-1}
Suppose that $\left(X_{\cdot}^{t,x},Y_{\cdot}^{t,x},Z_{\cdot}^{t,x}\right)\in
\mathscr S^2\oplus\mathscr M^2\left([t,T_0];\R^{L_1}\times \R^{L_2}\right)$
is a solution
of \eqref{Gamma-bsde} in time
interval $0\le t \le s \le T_0$ which satisfies \eqref{t2-1-0a}, then we have
$Y_s^{t,x}\in N$ a.s. for every $0\le t\le s\le T_0$ and $x\in M$.
\end{proposition}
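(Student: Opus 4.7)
The plan is to apply It\^o's formula to a smooth extension of $\operatorname{dist}_N^2$ evaluated at $Y_s^{t,x}$, exploit that the terminal value $Y_{T_0}^{t,x}$ lies on $N$, and close the resulting estimate by backward Gronwall using the uniform $L^\infty$ bound on $Z$ from \eqref{t2-1-0a}. Concretely, I would fix a non-negative $\rho\in C_b^2(\R^{L_2})$ with $\rho(p)=\operatorname{dist}_N^2(p)$ on $B(N,\delta_N)$; on this set one has $\bar\nabla\rho(p)=2(p-P_N(p))\in T_{P_N(p)}^{\bot}N$ and $\bar\nabla^2\rho(p)(v,v)=2|v|^2-2\langle v,dP_N(p)v\rangle$. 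Since $X_{T_0}^{t,x}\in M$ a.s.\ and $\bar h|_M=h:M\to N$, the terminal condition forces $Y_{T_0}^{t,x}\in N$ and hence $\rho(Y_{T_0}^{t,x})=0$. Applying It\^o's formula to $\rho(Y_r^{t,x})$ using the forward form $dY_r=\frac{1}{2}\sum_i\bar\Gamma(Y_r)(Z_r^i,Z_r^i)\,dr+\sum_i Z_r^i\,dW_r^i$ of \eqref{Gamma-bsde} and taking expectations gives
\[
\E[\rho(Y_s^{t,x})]=-\int_s^{T_0}\E[\mu_r]\,dr,\qquad \mu_r:=\frac{1}{2}\sum_{i=1}^{L_1}\Bigl[\langle\bar\nabla\rho(Y_r),\bar\Gamma(Y_r)(Z_r^i,Z_r^i)\rangle+\bar\nabla^2\rho(Y_r)(Z_r^i,Z_r^i)\Bigr].
\]

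The key technical step is a geometric lower bound: there exists $C>0$ (depending only on $N$) such that
\[
\langle\bar\nabla\rho(p),\bar\Gamma(p)(v,v)\rangle+\bar\nabla^2\rho(p)(v,v)\geq -C\rho(p)|v|^2,\qquad \forall\, p\in B(N,\delta_N),\ v\in\R^{L_2}.
\]
To prove it, I would use the explicit form $\bar\Gamma(p)(v,v)=\phi_N(\operatorname{dist}_N(p))\sum_{i,j}\frac{\partial^2 P_N}{\partial p_i\partial p_j}(P_N(p))v_iv_j$, the fact that $\bar\nabla\rho(p)\in T_{P_N(p)}^{\bot}N$, and the identities obtained by differentiating $P_N\circ P_N=P_N$ once and twice. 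These should show that the apparent $O(\sqrt{\rho(p)})|v|^2$ cross terms separately arising in $\bar\nabla^2\rho(p)(v,v)$ and in $\langle\bar\nabla\rho(p),\bar\Gamma(p)(v,v)\rangle$ cancel exactly, leaving only an $O(\rho(p))|v|^2$ remainder.

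To handle the possibility that $Y_r$ exits $B(N,\delta_N)$, I would introduce the stopping time $\tau:=\inf\{r\in[t,T_0]\colon Y_r\notin B(N,\delta_N)\}\wedge T_0$ and run the above It\^o argument on $[s\wedge\tau,T_0\wedge\tau]$. Combining the geometric lower bound with the bound $|Z_r(\omega)|\leq C_1$ from \eqref{t2-1-0a} yields $\mu_r\geq -CC_1^2\rho(Y_r)$ on $\{r\leq\tau\}$, so $\E[\rho(Y_{s\wedge\tau}^{t,x})]\leq CC_1^2\int_s^{T_0}\E[\rho(Y_{r\wedge\tau}^{t,x})]\,dr$. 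Backward Gronwall then forces $\E[\rho(Y_{s\wedge\tau}^{t,x})]=0$. Since $\rho(Y_\tau)\geq\delta_N^2>0$ on $\{\tau<T_0\}$, this in turn forces $\tau=T_0$ a.s., whence $Y_s^{t,x}\in N$ a.s.\ for every $s\in[t,T_0]$.

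The main obstacle is the geometric lower bound. A naive expansion produces cross terms of order $\sqrt{\rho(p)}|v|^2$ coming from $|\bar\nabla\rho(p)|=2\sqrt{\rho(p)}$ and from $dP_N(p)-\Pi_N(P_N(p))=O(\sqrt{\rho(p)})$, which would be fatal for Gronwall. The cancellation we need reflects a structural fact: $\bar\Gamma$ was defined via $d^2P_N\circ P_N$ precisely so that the It\^o drift $\frac{1}{2}\bar\Gamma(Y)(Z,Z)$ is the second-fundamental-form correction that keeps a Stratonovich SDE with tangential noise on $N$. Verifying this cancellation rigorously---most cleanly by working in coordinates adapted to the normal bundle of $N$ or by systematic use of the derivative identities for $P_N\circ P_N=P_N$---is the core technical content of the proof.
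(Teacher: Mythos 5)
Your core strategy coincides with the paper's: take a smooth $C_b$ modification $\rho$ (the paper calls it $G=\chi\circ\operatorname{dist}_N^2$) of the squared distance, apply It\^o's formula backwards from the known terminal value $Y_{T_0}^{t,x}=h(X_{T_0}^{t,x})\in N$, verify a quadratic Lyapunov inequality of the form
\[
\bar\nabla^2\rho(p)(u,u)+\langle\bar\nabla\rho(p),\bar\Gamma(p)(u,u)\rangle\ \ge\ -C\,\rho(p)\,|u|^2
\quad\text{for }p\in B(N,\delta_N),
\]
and close with Gronwall using the $L^\infty$ bound on $Z$. Your diagnosis of the cancellation mechanism is also correct: the $O(\sqrt{\rho})|u|^2$ cross terms in $\bar\nabla^2\rho$ and in $\langle\bar\nabla\rho,\bar\Gamma\rangle$ cancel because $p-P_N(p)$ is normal to $N$ at $P_N(p)$ (which kills the $\sum_k(p_k-P_N^k)\partial P_N^k/\partial p_l$ contribution), and what remains is the difference $\partial^2 P_N(P_N(p))-\partial^2 P_N(p)$, which is $O(\operatorname{dist}_N(p))=O(\sqrt{\rho(p)})$, paired against a normal vector of length $O(\sqrt{\rho(p)})$, giving the needed $O(\rho(p))|u|^2$. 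So far this is in exact agreement with the paper.

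The genuine gap is in the localization step. You stop at $\tau:=\inf\{r\ge t:Y_r\notin B(N,\delta_N)\}\wedge T_0$ and then assert that the It\^o argument on $[s\wedge\tau,T_0\wedge\tau]$ produces
$\E[\rho(Y_{s\wedge\tau})]\le CC_1^2\int_s^{T_0}\E[\rho(Y_{r\wedge\tau})]\,dr$. This drops a boundary term: It\^o's formula actually gives
\[
\E\bigl[\rho(Y_{s\wedge\tau})\bigr]=\E\bigl[\rho(Y_{T_0\wedge\tau})\bigr]-\E\Bigl[\int_{s\wedge\tau}^{T_0\wedge\tau}\mu_r\,dr\Bigr],
\]
and on $\{\tau<T_0\}$ one has $\rho(Y_{T_0\wedge\tau})=\rho(Y_\tau)=\delta_N^2\neq 0$, so $\E[\rho(Y_{T_0\wedge\tau})]=\delta_N^2\,\Pp(\tau<T_0)$ is a positive constant that Gronwall cannot absorb. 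The conclusion $\E[\rho(Y_{s\wedge\tau})]=0$ does not follow, and the subsequent step deducing $\tau=T_0$ a.s.\ is therefore not justified. The paper avoids this entirely by never stopping: the truncated Lyapunov function $G=\chi(\operatorname{dist}_N^2)$ is defined globally, and on $\R^{L_2}\setminus B(N,\delta_N)$ one simply uses $G(p)\ge\delta_N^2$ (together with boundedness of $\bar\nabla G$, $\bar\nabla^2 G$, $\bar\Gamma$) to get the trivial global estimate $\bar\nabla^2 G(p)(u,u)+\langle\bar\nabla G(p),\bar\Gamma(p)(u,u)\rangle\ge -c_5\,G(p)(1+|u|^2)$ for all $p,u\in\R^{L_2}$. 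With the globally valid inequality, It\^o's formula can be applied on the full interval $[s,T_0]$, the terminal term $G(\bar h(X_{T_0}^{t,x}))$ is exactly zero, and Gronwall closes cleanly to give $\E[G(Y_s^{t,x})]=0$, hence $Y_s^{t,x}\in N$ a.s. You should replace the stopping-time step with this global extension of the Lyapunov inequality; the rest of your argument then goes through.
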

\begin{proof}
Let $\delta_N>0$ be the constant introduced in Section \ref{section-2} such that
the nearest projection map $P_N:B(N,3\delta_N)\to N$ and the square of distance function
${\rm dist}^2_N: B(N,3\delta_N)\to \R_+$ are smooth.  Choosing a truncation function
$\chi\in C_b^\infty(\R)$ satisfying that $\chi'\ge 0$ and
\begin{equation*}
\chi(s)=
\begin{cases}
& s,\ \ \ \ \ \ \ s\le \delta_N^2,\\
& 4\delta_N^2,\ \ \ \ s>4\delta_N^2.
\end{cases}
\end{equation*}
We define $G:\R^{L_2} \to \R_+$ by $G(p):=\chi\Big({\rm dist}^2_N(p)\Big),\ p\in \R^{L_2}$.
It is easy to verify that $G(p)=4\delta_N^2$ for every $p\in \R^{L_2}$ with
${\rm dist}_N(p)>2\delta_N$.

Since $G(p)={\rm dist}_N^2(p)=|p-P_N(p)|^2$ when $p\in B(N,\delta_N)$,
it holds that for every $p\in B(N,\delta_N)$, $u=(u_1,\cdots, u_{L_2})\in \R^{L_2}$,
\begin{equation*}
\begin{split}
\bar \nabla^2 G(p)(u,u)=&2\sum_{k=1}^{L_2}\left(\sum_{i=1}^{L_2} u_i\left(\delta_{ik}-\frac{\partial P_N^k}{\partial p_i}(p)\right)\right)^2 -
2\sum_{i,j,k=1}^{L_2}(p_k-P_N^k(p))\frac{\partial^2 P_N^k}{\partial p_i \partial p_j}(p)u_iu_j\\
&\ge -2\sum_{i,j,k=1}^{L_2} (p_k-P_N^k(p))\frac{\partial^2 P_N^k}{\partial p_i \partial p_j}(p)u_iu_j,
\end{split}
\end{equation*}
where $\delta_{ij}$ denotes the Kronecker delta function
(i.e. $\delta_{ij}=0$ if $i\neq j$ and $\delta_{ij}=1$ when $i=j$), $P_N^k(p)$ denotes the $k$-th components of $P_N(p)$
(i.e. $P_N(p)=(P_N^1(p),\cdots P_N^{L_2}(p))$).
By the definition \eqref{e2-3} of $\bar \Gamma$ we can obtain that
for every $p\in B(N,\delta_N)$, $u=(u_1,\cdots, u_{L_2})\in \R^{L_2}$,
\begin{equation*}
\begin{split}
\left\langle \bar \nabla G(p), \bar \Gamma(p)(u,u)\right\rangle_{\R^{L_2}}&=
2\sum_{i,j,k=1}^{L_2} (p_k-P_N^k(p))\frac{\partial^2 P_N^k}{\partial p_i \partial p_j}(P_N(p))u_iu_j\\
&-
2\sum_{i,j,k,l=1}^{L_2} (p_k-P_N^k(p))\frac{\partial P_N^k}{\partial p_l}(p)\frac{\partial^2 P_N^l}{\partial p_i \partial p_j}(P_N(p))u_iu_j\\
&=2\sum_{i,j,k=1}^{L_2} (p_k-P_N^k(p))\frac{\partial^2 P_N^k}{\partial p_i \partial p_j}(P_N(p))u_iu_j.
\end{split}
\end{equation*}
Here  the last step follows from the fact that
$$\sum_{k=1}^{L_2}(p_k-P_N^k(p))\frac{\partial P_N^k}{\partial p_l}(p)=0,$$
which is due to the property that $\frac{\partial P_N}{\partial p_l}(p)\in T_p N$ and $p-P_N(p)\in T_p^{\bot} N$.
Putting all above estimates together we have
\begin{equation*}
\begin{split}
&\quad\ \bar \nabla^2 G(p)(u,u)+\left\langle \bar \nabla G(p), \bar \Gamma(p)(u,u)\right\rangle_{\R^{L_2}}\\
&\ge  2\sum_{i,j,k=1}^{L_2} (p_k-P_N^k(p))\left(\frac{\partial^2 P_N^k}{\partial p_i \partial p_j}(P_N(p))
-\frac{\partial^2 P_N^k}{\partial p_i \partial p_j}(p)\right)u_iu_j\\
&\ge -c_2{\rm dist}_N^2(p)|u|^2=-c_2G(p)|u|^2,\ p\in B(N,\delta_N),\ u=(u_1,\cdots, u_L)\in \R^{L_2}.
\end{split}
\end{equation*}
Still by the definition of $G$ and $\bar \Gamma$ we know that for every $p\in \R^{L_2}/B(N,\delta_N)$ and $u\in \R^{L_2}$,
\begin{align*}
&\bar \nabla^2 G(p)(u,u)+\left\langle \bar \nabla G(p), \bar \Gamma(p)(u,u)\right\rangle_{\R^{L_2}}
\ge -c_3(1+|u|^2)\ge -c_4G(p)(1+|u|^2),
\end{align*}
where the second inequality is due to the fact that $G(p)\ge \delta_N^2$ for every $p\in \R^{L_2}/B(N,\delta_N)$.

combining above  two estimates yields that
\begin{equation}\label{t2-2-4}
\bar \nabla^2 G(p)(u,u)+\left\langle \bar \nabla G(p), \bar \Gamma(p)(u,u)\right\rangle_{\R^{L_2}}
\ge -c_5G(p)(1+|u|^2),\ \forall\ p,u\in \R^{L_2}.
\end{equation}
Hence applying It\^o's formula to the second equation in \eqref{Gamma-bsde} we obtain that
for every $0\le t\le s\le T_0$ and $x\in M$,
\begin{align*}
0=G\left(\bar h\left(X_{T_0}^{t,x}\right)\right)&=G(Y_s^{t,x})+\sum_{i=1}^{L_1}\int_s^{T_0}\langle
\bar \nabla G(Y_r^{t,x}), Z_r^{t,x,i} \rangle_{\R^{L_2}} dW_r^i\\
&+
\sum_{i=1}^{L_1}\int_s^{T_0}\frac{1}{2}\Big(\bar \nabla^2 G(Y_r^{t,x})(Z_r^{t,x,i},Z_r^{t,x,i})
+\left\langle \bar \nabla G(Y_r^{t,x}), \bar \Gamma(Y_r^{t,x})\left(Z_r^{t,x,i},Z_r^{t,x,i}\right)\right\rangle_{\R^{L_2}}\Big)dr\\
&\ge G(Y_s^{t,x})+ \sum_{i=1}^{L_1}\int_s^{T_0}\langle \bar \nabla G(Y_r^{t,x}), Z_r^{t,x,i} \rangle_{\R^{L_2}} dW_r^i-
\frac{c_5}{2}\int_s^{T_0}G(Y_r^{t,x})(1+|Z_r^{t,x}|^2)dr\\
&\ge G(Y_s^{t,x})+ \sum_{i=1}^{L_1}\int_s^{T_0}\langle \bar \nabla G(Y_r^{t,x}), Z_r^{t,x,i}\rangle_{\R^{L_2}} dW_r^i
-c_6\int_s^{T_0}G(Y_r^{t,x})dr,
\end{align*}
where the first step is due to the fact that $G\left(\bar h\left(X_{T_0}^{t,x}\right)\right)=0$ a.s.
for every $x\in M$ (since  $\bar{h}(X_{T_0}^{t,x}) \in N$ a.s.), we have used \eqref{t2-2-4} in the third step, and
the last step follows from \eqref{t2-1-0a}.

Hence taking the expectation in above inequality we arrive at
\begin{align*}
\E[G(Y_s^{t,x})]\le c_6\int_s^{T_0}\E[G(Y_r^{t,x})]dr,\ \forall\ s\in [t,T_0].
\end{align*}
So by  Gronwall's inequality we obtain $\E[G(Y_s^{t,x})]=0$ which implies $G(Y_s^{t,x})=0$ and $Y_s^{t,x}\in N$ a.s. for
every $s\in [t,T_0]$ and $x\in M$.
\end{proof}
\end{section}

\noindent \textbf{Acknowledgements.}
The research of Xin Chen and Wenjie Ye is supported by the National Natural Science Foundation of China (No.\ 11871338).
We thank the referees for helpful comments on an earlier version of this paper.

\end{document}